\date{\today}
\theoremstyle{theorem}
    \newtheorem{theorem}{Theorem}
    \newtheorem{lemma}[theorem]{Lemma}
\theoremstyle{definition} 
    \newtheorem{definition}[theorem]{Definition}
    \newtheorem{result}[theorem]{Result}
    \newtheorem{remark}[theorem]{Remark}
    \newtheorem{example}[theorem]{Example}
    \newtheorem{exercise}[theorem]{Exercise}
\def\suchthat{\; : \;}
\def\Z{\mathbb{Z}}
\def\R{\mathbb{R}}
\def\Z{\mathbb{Z}}
\def\l{\left}
\def\r{\right}
\def\<{\langle}
\def\>{\rangle}
\newcommand{\E}{\mbox{\bf E}}
\def\var{\mbox{Var}}
\newcommand\Tr{{\mbox{Tr}}}
\newcommand\tr{{\mbox{tr}}}
\newcommand\mnote[1]{} 
\newcommand\be{\begin{equation*}}
\newcommand\ee{\end{equation*}}
\newcommand\ben{\begin{equation}}
\newcommand\een{\end{equation}}
\newcommand\bes{\begin{eqnarray*}}
\newcommand\ees{\end{eqnarray*}}
\newcommand\bex{\begin{exercise}}
\newcommand\eex{\end{exercise}}
\newcommand\beg{\begin{example}}
\newcommand\eeg{\end{example}}
\newcommand\benu{\begin{enumerate}}
\newcommand\eenu{\end{enumerate}}
\newcommand\beit{\begin{itemize}}
\newcommand\eeit{\end{itemize}}
\newcommand\berk{\begin{remark}}
\newcommand\eerk{\end{remark}}
\newcommand\bdefn{\begin{defintion}}
\newcommand\edefn{\end{definition}}
\newcommand\bthm{\begin{theorem}}
\newcommand\ethm{\end{theorem}}
\newcommand\bprf{\begin{proof}}
\newcommand\eprf{\end{proof}}
\newcommand\blem{\begin{lemma}}
\newcommand\elem{\end{lemma}}
\newcommand{\sm}{{\raise0.3ex\hbox{$\scriptstyle \setminus$}}}
\def\l{\left}
\def\r{\right}
\def\CHI{\mathchoice%
{\raise2pt\hbox{$\chi$}}%
{\raise2pt\hbox{$\chi$}}%
{\raise1.3pt\hbox{$\scriptstyle\chi$}}%
{\raise0.8pt\hbox{$\scriptscriptstyle\chi$}}}
\def\smalloplus{\raise1pt\hbox{$\,\scriptstyle \oplus\;$}}
\title{Universality in the fluctuation of eigenvalues of random circulant matrices}
\author{Kartick Adhikari}
\address{Theoretical Statistics and  Mathematics Unit\\
        Indian Statistical Institute\\
        Kolkata 700108, India}
\email{kartickmath [at] gmail.com}
\author{Koushik Saha}
\address{Department of Mathematics\\
        Indian Institute of Technology Bombay\\
         Powai, Mumbai, Maharashtra 400076, India}
\email{koushik.saha [at] iitb.ac.in}
\date{\today}
\thanks{This work is partially supported by National Post-Doctoral Fellowship, India of Kartick Adhikari (reference no. PDF/2016/001601).}
\begin{document}

\begin{abstract}
We  show that the linear statistics of eigenvalues of random circulant matrices obey the Gaussian central limit theorem for a large class of input sequences.
\end{abstract}

\maketitle

\noindent{\bf Keywords :} Circulant matrix, linear statistics of eigenvalues, central limit theorem, Gaussian distribution, spectral norm.
\section{ introduction and main results}
Let $A_n$ be an $n\times n$ matrix with real or complex entries. The linear statistics of
eigenvalues $\lambda_1,\lambda_2,\ldots, \lambda_n$ of $A_n$ is a
function of the form
$$
\frac{1}{n}\sum_{k=1}^{n}f(\lambda_k)
$$
where $f$ is some  fixed function. The function $f$ is known as the test function. One of the interesting 
object  to study in random matrix theory  is the fluctuation of linear
statistics of eigenvalues of random matrices.  The study of fluctuation of linear statistics of eigenvalues was  initiated by Arharov \cite{arharov} in 1971 for sample covariance matrices. In 1975 Girko \cite{girko} studied the central limit theorem (CLT)  of the traces of the Wigner and sample covariance matrices using martingale techniques. In
1982, Jonsson \cite{jonsson} proved the 
CLT of linear eigenvalue statistics for Wishart matrices using method of moments.  After that the fluctuations of
eigenvalues for various  random matrices have been extensively studied by various people.  
For new results on fluctuations  of linear eigenvalue statistics of  Wigner and sample covariance matrices,  see  \cite{johansson1998}, \cite{soshnikov1998tracecentral}, \cite{bai2004clt}, \cite{lytova2009central}, \cite{shcherbina2011central}.  For   band and sparse  random matrices, see  \cite{anderson2006clt},  \cite{jana2014}, \cite{li2013central},  \cite{shcherbina2015} and for Toeplitz and band Toeplitz matrices, see  \cite{chatterjee} and \cite{liu2012}. 


In a recent article \cite{adhikari_saha2017},  the CLT for linear eigenvalue statistics has been established in total variation norm for  the circulant matrices and of its variants  with Gaussian entries. Here we consider the fluctuation problem for the circulant matrices with general entries which are independent and  satisfy some moment condition.
A sequence is said to be an {\it input sequence} if the matrices are constructed from the given sequence. We consider the input sequence of the form $\{x_i: i\ge 0\}$
and the circulant matrix is defined as 
$$
C_n=\left(\begin{array}{cccccc}
x_0 & x_1 & x_2 & \cdots & x_{n-2} & x_{n-1} \\
x_{n-1} & x_0 & x_1 & \cdots & x_{n-3} & x_{n-2}\\
\vdots & \vdots & {\vdots} & \ddots & {\vdots} & \vdots \\
x_1 & x_2 & x_3 & \cdots & x_{n-1} & x_0
\end{array}\right).
$$
For $j=1,2,\ldots, n-1$, its $(j+1)$-th row is obtained by giving its $j$-th row a right circular shift by one positions and the (i,\;j)-th element of the matrix  $x_{(j-i) \mbox{ \tiny{mod} } n}$.

In our first result  we consider the fluctuation of linear  eigenvalue statistics of  circulant matrices with a  polynomial test function. Let $P_d(x)=\sum_{k=2}^da_kx^k$ be a real polynomial of degree $d$ where $d\geq 2$. 

\begin{theorem}\label{thm:cirpoly} 
Suppose  $C_n$ is the random circulant matrix with independent input sequence $\{\frac{X_i}{\sqrt n}\}_{i\geq 0}$ such that 
\begin{equation}\label{eqn:condition}
\E(X_i)=0, \E(X_i^2)=1 \ \mbox{and}\  \sup_{i\geq 0}\E(|X_i|^k)=\alpha_k<\infty \ \mbox{for}\ k\geq 3.
\end{equation}  
Then, as $n\to \infty$,
\begin{align*}
\frac{\Tr [P_d(C_n)]-\E\Tr [P_d(C_n)]}{\sqrt{n}}\stackrel{d}{\longrightarrow} N(0,\sigma_{p_d}^2), 
\end{align*}
where $\sigma_{p_d}^2= \sum_{\ell=2}^d a_{\ell}^2{\ell}!\sum_{s=0}^{{\ell}-1}f_{{\ell}}(s)$ and $f_{\ell}(s)=\sum_{k=0}^{s}(-1)^k\binom{\ell}{k}(s-k)^{\ell-1}$.
\end{theorem}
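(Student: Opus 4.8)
The plan is to reduce everything to a moment computation for the traces $\Tr[C_n^\ell]$ and then run the method of moments. Since the eigenvalues of a circulant are $\lambda_j=\sum_{k=0}^{n-1}x_k\omega^{jk}$ with $\omega=e^{2\pi i/n}$, one has the exact identity
$$\Tr[C_n^\ell]=\sum_{j=0}^{n-1}\lambda_j^\ell=n\sum_{\substack{\mathbf{k}\in\{0,\dots,n-1\}^\ell\\ k_1+\cdots+k_\ell\equiv 0\,(\mathrm{mod}\,n)}}x_{k_1}\cdots x_{k_\ell}.$$
Substituting $x_k=X_k/\sqrt n$ and writing $Y_n$ for the centred, $\sqrt n$-normalized statistic in the theorem gives
$$Y_n=\sum_{\ell=2}^d a_\ell\,n^{(1-\ell)/2}\sum_{\mathbf{k}:\,k_1+\cdots+k_\ell\equiv 0\,(\mathrm{mod}\,n)}\bigl(X_{k_1}\cdots X_{k_\ell}-\E[X_{k_1}\cdots X_{k_\ell}]\bigr).$$
Intuitively $\Tr[P_d(C_n)]=\sum_j P_d(\lambda_j)$ is a sum over the normalized Fourier coefficients $\lambda_j$, which are asymptotically independent (complex) Gaussians, so this is morally a CLT for a sum of asymptotically independent summands; the moment method below makes this precise without quantifying the dependence.

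First I would discard the non-generic index tuples. The number of $\mathbf{k}$ with $\sum_i k_i\equiv 0\,(\mathrm{mod}\,n)$ having at most $p\le\ell-1$ distinct coordinates is $O(n^{p-1})$, and two such mean-zero products are correlated for only $O(1)$ partner tuples, so an $L^2$ estimate — using $\sup_i\E|X_i|^m=\alpha_m<\infty$ to bound the finitely many high-multiplicity factors — shows that their total contribution to $Y_n$ is $o(1)$. Hence $Y_n=\widetilde Y_n+o_{L^2}(1)$, where $\widetilde Y_n=\sum_{\ell=2}^d a_\ell n^{(1-\ell)/2}V_\ell$ with $V_\ell=\sum X_{k_1}\cdots X_{k_\ell}$ over the distinct-coordinate tuples obeying the congruence; on such tuples $\E[X_{k_1}\cdots X_{k_\ell}]=0$, so each $V_\ell$ is already centred.

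The core is the computation of $\E[\widetilde Y_n^p]$. Expanding, one sums over $p$ index-tuples $\mathbf{k}^{(1)},\dots,\mathbf{k}^{(p)}$ of lengths $\ell_1,\dots,\ell_p\in\{2,\dots,d\}$; by independence a summand survives only if every value occurring in $\bigcup_t\mathbf{k}^{(t)}$ occurs at least twice. Counting free coordinates against the $p$ congruences $\sum_i k^{(t)}_i\equiv 0$, the terms of maximal order are exactly those in which the $p$ factors split into pairs $\{t,t'\}$ with $\ell_t=\ell_{t'}$ and $\mathbf{k}^{(t')}$ a permutation of $\mathbf{k}^{(t)}$ — then the two congruences of a pair coincide, saving one constraint — while every other configuration (triple or higher overlaps, complete-graph–type overlaps, partially overlapping pairs) is $o(1)$. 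A matched pair of type $\ell$ contributes asymptotically $a_\ell^2\,\ell!\,N_\ell(n)/n^{\ell-1}$, where $N_\ell(n)=(c_\ell+o(1))\,n^{\ell-1}$ is the number of distinct-coordinate $\ell$-tuples in $\{0,\dots,n-1\}^\ell$ with coordinate sum $\equiv 0\,(\mathrm{mod}\,n)$; splitting $N_\ell(n)$ according to the winding number $s=\tfrac1n\sum_i k_i\in\{1,\dots,\ell-1\}$ and applying the generating-function count for the number of solutions of $\sum_i k_i=sn$ produces exactly the quantities $f_\ell(s)$, whence the constant in $\sigma_{p_d}^2$. Therefore $\E[\widetilde Y_n^p]\to (p-1)!!\,(\sigma_{p_d}^2)^{p/2}$ for $p$ even and $\to 0$ for $p$ odd, and since $N(0,\sigma_{p_d}^2)$ is determined by its moments we conclude $Y_n\convd N(0,\sigma_{p_d}^2)$.

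I expect the main obstacle to be the bookkeeping in this last step: proving rigorously that every non-pair joint configuration is genuinely $o(1)$. This requires a careful case analysis of how the linear congruences $\sum_i k^{(t)}_i\equiv 0\,(\mathrm{mod}\,n)$ degenerate when coordinates are forced to agree across tuples (so that the number of surviving free coordinates drops), combined with the uniform moment bounds $\alpha_m$ to absorb the bounded number of moments $\E[X_i^{m}]$, $m\ge 3$, that appear. By comparison, identifying $c_\ell$ via the winding-number count, the vanishing of all odd moments (no perfect matching of an odd number of factors), and treatment of the negligible boundary/self-paired terms are comparatively routine once pair-matching dominance is in place. An alternative to the moment method would be a Lindeberg replacement of the $X_i$ by Gaussians, invoking the Gaussian CLT of \cite{adhikari_saha2017}; there the obstacle is instead bounding $\sum_i\E|X_i|^3\cdot\E|\partial_i^3 Y_n|$, which again reduces to the same combinatorial control of the polynomial structure of $Y_n$.
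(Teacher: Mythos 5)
Your proposal is correct and follows essentially the same route as the paper: the same trace formula $\Tr[C_n^\ell]=n\sum_{k_1+\cdots+k_\ell\equiv 0}x_{k_1}\cdots x_{k_\ell}$ (the paper derives it by acting on standard basis vectors rather than via the Fourier eigenvalues, which is immaterial), followed by the method of moments with pair-matching dominance, the reduction to distinct-coordinate tuples, and the winding-number decomposition $\sum_i k_i=sn$ yielding the constants $f_\ell(s)$. The only cosmetic difference is that you perform the $L^2$ reduction to distinct-coordinate tuples up front, whereas the paper absorbs it into the variance and higher-moment estimates.
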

We  use the method of moments to prove Theorem \ref{thm:cirpoly}.  Note that, the constant term and the first degree term are not considered in the polynomial $P_d(x)$. The constant term of a matrix polynomial will  be a constant times the identity matrix and this term will not effect the fluctuation result of linear eigenvalue  statistics of a random matrix as we are centering  the linear eigenvalue statistics by its mean. 

If we consider a degree one monomial  of 
the random circulant matrix $C_n$ with independent input sequence $\{\frac{X_i}{\sqrt n}\}_{i\geq 0}$ where $\E(X_i)=0$ then 
$$\frac{\Tr(C_n)-\E\Tr(C_n)}{\sqrt n}=X_0.$$
Thus the limiting distribution depends on the distribution of $X_0$ and hence CLT type result does not hold for degree one monomial. Due to these reasons we have not considered the constant and the first degree terms in $P_d(x)$.

Next we consider the fluctuation problem for  circulant matrices  in total variation norm. It has been shown \cite{adhikari_saha2017} that 
$$
\frac{\Tr(A_n^{p_n})-\E(\Tr(A_n^{p_n}))}{\sqrt{\var(\Tr(A_n^{p_n}))}} \;\mbox{converges in total variation norm to } N(0,1),
$$
as $n\to \infty$, where $p_n=o(\log n/\log\log n)$ and $A_n$ is one of circulant, reverse circulant, symmetric circulant and Hankel matrices with Gaussian inputs. In this article, we show that the above results hold  when the matrices are constructed from the input sequence belongs to $\mathcal L(c_1,c_2)$, for some $c_1,c_2>0$ and subgaussian. 

\begin{definition}
For each $c_1,c_2>0$, let $\mathcal L(c_1,c_2)$ be the class of probability measures on $\mathbb R$ that arise as laws of random variables like $u(Z)$, $Z$ is a standard Gaussian random variable and $u$ is a twice continuously differentiable function such that for all $x\in \mathbb R$
$$
|u'(x)|\le c_1 \;\;\mbox{ and } |u''(x)|\le c_2.
$$ 
\end{definition}

For example, the standard Gaussian random variable is in $\mathcal L(1,0)$. The uniformly distributed random variable on $[0,1]$ is in $\mathcal L((2\pi)^{-1/2}, (2\pi e)^{-1/2})$. 

\begin{definition}
A random variable $X$ is said to be {\it $\sigma$-subgaussian} or subgaussian with parameter $\sigma$, $\sigma>0$, if $
\E[e^{tX}]\le e^{\sigma^2t^2/2}
$
 for every $t\in \R$. 
\end{definition}
 For example, the Bernoulli random variable with mass at $+1$ and $-1$ with equal probability is $1$-subgaussian. More generally, if $X$ is a random variable with $\E [X]=0$ and  $|X|\le \sigma$ for some $\sigma>0$, then $X$ is $\sigma$-subgaussian. The normal random variable with mean zero variance $\sigma^2$ is $\sigma$-subgaussian. 
 
Also note that if a random variable $X$ is $\sigma$-subgaussian, then its (absolute) moments
are bounded above by an expression involving $\sigma$ and
the gamma function (see e. g. \cite[p. 93]{stroock}). Therefore if a sequence of random variables $\{X_i\}_{i\geq 0}$ is $\sigma$-subgaussian then $\sup_{i\geq 0}E(|X_i|^k)<\infty$ for all $k\in \mathbb N$. We use this fact in the proof of  Theorem \ref{thm:main1}. Now we have the following central limit theorem result in total variation norm.

%

\begin{theorem}\label{thm:main1}
Suppose  $C_n$ is the random circulant matrix with input sequence $\{\frac{X_i}{\sqrt n}\}_{i\geq 0}$ such that $X_i$'s are independent symmetric $\sigma$-subgaussian random variables and  $X_i\in \mathcal L(c_1,c_2)$ for some finite $c_1$ and $c_2$.  Then, as $n\to \infty$,
\begin{equation}\label{total variation result}
\frac{\Tr(P_d(C_n))-\E(\Tr(P_d(C_n)))}{\sqrt{Var(\Tr(P_d(C_n)))}} \;\mbox{converges in total variation to } N(0,1),
\end{equation}
where $P_d(x)=\sum_{k=2}^da_kx^k$, a real polynomial of degree $d\ge 2$.
\end{theorem}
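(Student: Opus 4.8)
The plan is to upgrade the method-of-moments CLT behind Theorem \ref{thm:cirpoly} to a total-variation statement by invoking Chatterjee's second-order Poincar\'e inequality \cite{chatterjee}. Since each $X_i=u(Z_i)$ for i.i.d.\ standard Gaussians $Z_i$ and a $C^2$ function $u$ with $|u'|\le c_1$, $|u''|\le c_2$, the statistic $W_n:=\Tr(P_d(C_n))$ is a $C^2$ function $g_n$ of $(Z_0,\dots,Z_{n-1})$, and \cite{chatterjee} bounds
\[
d_{\mathrm{TV}}\!\Big(\tfrac{W_n-\E W_n}{\sqrt{\var W_n}},\,N(0,1)\Big)\ \le\ \frac{2\sqrt5}{\var(W_n)}\,\big(\E\|\nabla g_n\|^4\big)^{1/4}\big(\E\|\nabla^2 g_n\|_{\mathrm{op}}^4\big)^{1/4}.
\]
So it suffices to prove three estimates: (a) $\var(W_n)\ge c\,n$ for some $c>0$ and all large $n$; (b) $\E\|\nabla g_n\|^4=O(n^2)$; (c) $\E\|\nabla^2 g_n\|_{\mathrm{op}}^4=n^{o(1)}$. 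Then the right-hand side is $O(n^{-1/2+o(1)})\to0$.

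All three estimates are powered by the Fourier diagonalisation of a circulant. The eigenvalues of $C_n$ are $\mu_t=n^{-1/2}\sum_{j=0}^{n-1}X_j\omega^{tj}$ with $\omega=e^{2\pi i/n}$, so (all index sums taken mod $n$) $\Tr(C_n^\ell)=n^{1-\ell/2}\sum_{j_1+\cdots+j_\ell\equiv0}X_{j_1}\cdots X_{j_\ell}$ is a degree-$\ell$ homogeneous polynomial in the $X_i$. I would differentiate to get $\partial_{X_i}\Tr(C_n^\ell)=\ell\,n^{1-\ell/2}S_i^{(\ell)}$ and $\partial_{X_i}\partial_{X_{i'}}\Tr(C_n^\ell)=\ell(\ell-1)\,n^{1-\ell/2}\,T^{(\ell)}_{(i+i')\bmod n}$, where $S_i^{(\ell)}=\sum_{j_1+\cdots+j_{\ell-1}\equiv-i}X_{j_1}\cdots X_{j_{\ell-1}}$ and $T^{(\ell)}_m=\sum_{j_1+\cdots+j_{\ell-2}\equiv-m}X_{j_1}\cdots X_{j_{\ell-2}}$. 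The key point is that the Hessian of $\Tr(C_n^\ell)$ has $(i,i')$ entry depending only on $i+i'\bmod n$; multiplying by the coordinate-reversal permutation turns it into an ordinary circulant whose symbol $T^{(\ell)}$ has discrete Fourier transform $\overline{\lambda_t}^{\,\ell-2}$ with $\lambda_t=\sqrt n\,\mu_t$, so its operator norm equals $\ell(\ell-1)\,n^{1-\ell/2}\max_t|\lambda_t|^{\ell-2}=\ell(\ell-1)\,\|C_n\|_{\mathrm{op}}^{\ell-2}$. Moreover each $\mu_t$ is a linear combination of the independent $\sigma$-subgaussian $X_j$ with sum of squared coefficients at most $1$, hence $\sigma$-subgaussian up to an absolute constant; a union bound over $t$ gives $\P(\|C_n\|_{\mathrm{op}}>s)\le 4n\,e^{-s^2/(4\sigma^2)}$, whence $\E\|C_n\|_{\mathrm{op}}^p=O((\log n)^{p/2})$ for each fixed $p$.

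Next I would pass from $X$ to $Z$: with $D=\mathrm{diag}(u'(Z_i))$ one has $\nabla g_n=D\,\nabla_X W_n$ and $\nabla^2 g_n=D(\nabla^2_X W_n)D+\mathrm{diag}\big(u''(Z_i)\,\partial_{X_i}W_n\big)$, so $\|\nabla g_n\|\le c_1\|\nabla_X W_n\|$ and $\|\nabla^2 g_n\|_{\mathrm{op}}\le c_1^2\|\nabla^2_X W_n\|_{\mathrm{op}}+c_2\max_i|\partial_{X_i}W_n|$. For (b): a direct moment computation using the bounded moments of the $X_i$ (pairing the $n^{\ell-2}$ monomials of $S_i^{(\ell)}$) gives $\E|\partial_{X_i}W_n|^4=O(1)$ uniformly in $i$, hence $\E\|\nabla_X W_n\|^4=\E\big(\sum_i(\partial_{X_i}W_n)^2\big)^2\le\big(\sum_i(\E|\partial_{X_i}W_n|^4)^{1/2}\big)^2=O(n^2)$. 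For (c): the first summand is $O((\log n)^{(d-2)/2})$ by the previous paragraph, and $\E\max_i|\partial_{X_i}W_n|^{2q}\le\sum_i\E|\partial_{X_i}W_n|^{2q}=O_q(n)$ gives $\|\max_i|\partial_{X_i}W_n|\|_{L^4}\le\|\max_i|\partial_{X_i}W_n|\|_{L^{2q}}=O(n^{1/(2q)})=n^{o(1)}$ on taking $q$ large, so $(\E\|\nabla^2 g_n\|_{\mathrm{op}}^4)^{1/4}=n^{o(1)}$. For (a), I would reuse the second-moment analysis behind Theorem \ref{thm:cirpoly}: $\Cov(\Tr(C_n^\ell),\Tr(C_n^{\ell'}))$ vanishes when $\ell+\ell'$ is odd (here the symmetry of the $X_i$ is used) and is $O(1)$ when $\ell\ne\ell'$, while $\var(\Tr(C_n^\ell))=c_\ell\,n+O(1)$ with $c_\ell>0$; since $a_d\ne0$ this yields $\var(W_n)\asymp n$ with a positive leading constant, in particular $\var(W_n)\ge c\,n$ eventually. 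Combining (a)--(c) in the displayed bound completes the proof.

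The main obstacle will be the collection of moment estimates on the polynomial statistics $S_i^{(\ell)}$: controlling them not only in $L^2$ but in $L^4$ and uniformly over $i$ without losing powers of $n$, which is exactly where the subgaussian hypothesis enters --- through the finiteness of all moments $\sup_i\E|X_i|^k<\infty$ noted before Theorem \ref{thm:main1} --- together with the bookkeeping that shows the asymptotic variance is strictly positive. The circulant structure is what makes the Hessian operator norm exactly computable (reducing it to $\|C_n\|_{\mathrm{op}}^{\ell-2}$), which is indispensable, since a crude entrywise bound on the Hessian would be far too lossy to beat the $1/\var(W_n)$ normalisation.
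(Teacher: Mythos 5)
Your proposal is correct and rests on the same engine as the paper: Chatterjee's second-order Poincar\'e inequality, combined with (i) an $O(n^{1/2})$ bound on the fourth moment of the gradient, (ii) a polylogarithmic bound on the operator norm of the Hessian via $\|C_n\|$, and (iii) the variance asymptotics $\var(\Tr P_d(C_n))\sim \sigma_{P_d}^2 n$ carried over from the moment computation of Theorem \ref{thm:cirpoly}. Where you differ is in how the technical inputs are obtained. The paper applies the packaged $\mathcal L(c_1,c_2)$ version of the inequality (Result \ref{re:sourav}) directly to $g(X_0,\dots,X_{n-1})=\Tr P_d(C_n)$, gets the Hessian bound $\|\nabla^2 g\|\le \tfrac1n f_2(\|C_n\|)$ from Chatterjee's general lemma on $A\mapsto\Tr f(A)$ (Result \ref{re:norm}) together with a change of variables from matrix entries to $x_j$, and imports the almost-sure spectral norm bound $\|C_n\|\le\alpha\sqrt{\log n}$ (Result \ref{res:norm1}). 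You instead start from the Gaussian form of the inequality and re-derive the $\mathcal L(c_1,c_2)$ case by an explicit chain rule through $u$, and you exploit the circulant structure to compute the Hessian exactly: observing that $\partial_{X_i}\partial_{X_{i'}}\Tr(C_n^\ell)$ depends only on $(i+i')\bmod n$, you diagonalise the resulting reverse-circulant in the Fourier basis and obtain the operator norm $\ell(\ell-1)\|C_n\|^{\ell-2}$ on the nose, and you get the moments of $\|C_n\|$ from a union bound over the explicit subgaussian eigenvalues rather than by citation. Your route is more self-contained and, at the Hessian step, arguably cleaner: the paper's passage from an eventual a.s.\ bound on $\|C_n\|$ to the $L^4$ bound defining $\kappa_2$ is stated somewhat loosely, whereas your tail bound $\P(\|C_n\|>s)\le 4n e^{-s^2/(4\sigma^2)}$ yields $\E\|C_n\|^{4(d-2)}=O((\log n)^{2(d-2)})$ directly. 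The price is that your argument is specific to circulants (the exact Fourier computation of the Hessian norm), while the paper's Results \ref{re:norm} and \ref{res:norm1} would transfer with little change to the reverse-circulant and symmetric-circulant variants mentioned in the remark. Two small points to tidy up: you should note explicitly that $\sum_{s=0}^{\ell-1}f_\ell(s)=\lim_n |A_\ell|/n^{\ell-1}=1$, so the limiting variance equals $\sum_{\ell=2}^d a_\ell^2\,\ell!>0$, which is what makes step (a) nonvacuous; and a fixed $q$ (even $q=2$) already suffices in your $L^{2q}$ bound on $\max_i|\partial_{X_i}W_n|$, since $n^{1/2}\cdot n^{1/4}/n\to0$, so no diagonalisation in $q$ is needed.
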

\begin{remark}
 As we are dealing with  circulant matrices in this article,   we have stated the total variation norm convergence result for  circulant matrices only. But the result \eqref{total variation result} holds  for other variants  of circulant matrices also, namely, for reverse circulant  and symmetric circulant matrices. For description of these matrices, see \cite{adhikari_saha2017}.
\end{remark}
Note that there is a large class of random variables which satisfy the assumptions on the input sequence in Theorem \ref{thm:main1}.  For example, standard Gaussian random variable,  symmetric uniform random variable and linear combination of these two belong to $\mathcal L(c_1,c_2)$ for some $c_1,c_2\geq 0$ and subgaussian. The proof  techniques of Theorem \ref{thm:main1} passively depend  on Stein's method and second order Poincar\'e inequality (see \cite{chatterjee}). In particular, we use Result \ref{re:sourav}, which relies on Stein's method and second order Poincar\'e inequality.
The rest of the article is organized as follows. In Section \ref{sec:poly} we give a proof of Theorem \ref{thm:cirpoly} using moment method. In Section \ref{sec:totalvariation}, we prove  Theorem \ref{thm:main1}.

\section{Proof of Theorem \ref{thm:cirpoly}}\label{sec:poly}

%

We first define some notation which will be used   in the proof of Theorem \ref{thm:cirpoly}.
\begin{align}
A_p&=\{(i_1,\ldots,i_p)\in \Z^p\suchthat i_1+\cdots+i_p=0\;{(\mbox{mod $n$})},\; 0\le i_1,\ldots, i_p\le n-1\},\label{def:A_p}\\
A_{p}'&=\{(i_1,\ldots,i_p)\in \Z^p\suchthat i_1+\cdots+i_p=0\;{(\mbox{mod $n$})},\; 0\le i_1\neq i_2\neq \cdots\neq i_p\le n-1\},\nonumber\\
A_{p,s}&=\{(i_1,\ldots,i_p)\in \Z^p\suchthat i_1+\cdots+i_p=sn,\; 0\le i_1,\ldots, i_p\le n-1\},\nonumber \\
A_{p,s}'&=\{(i_1,\ldots,i_p)\in \Z^p\suchthat i_1+\cdots+i_p=sn,\; 0\le i_1\neq i_2\neq \cdots\neq i_p\le n-1\}.\nonumber
\end{align}
We prove Theorem \ref{thm:cirpoly} by the method of moments. To apply this method  we need to calculate the higher order moments of linear eigenvalue statistics of the circulant matrices and  that involve the trace of higher power of the circulant matrices.  So we calculate the trace of $(C_n)^p$ for some positive integer $p$.

Let $e_1,\ldots,e_n$ be the standard unit vectors in $\mathbb R^n$, i.e., $e_i=(0,\ldots,1,\ldots, 0)^t$ ($1$ in $i$-th place).  Therefore we have 
\begin{align*}
(C_n)e_i=\mbox{$i$-th column}=\sum_{i_1=0}^{n-1}x_{i_1}e_{i-i_1 \mbox{ mod $n$}},
\end{align*}
for $i=1,\ldots, n$. In the last equation $e_0$ stands for $e_n$. Repeating the procedure we get 
\begin{align*}
(C_n)^2e_i=\sum_{i_1,i_2=0}^{n-1}x_{i_1}x_{i_2}e_{i-i_1-i_2 \mbox{ mod $n$}},
\end{align*}
for $i=1,\ldots, n$. Therefore in general we get
\begin{align*}
(C_n)^{p}e_i&=\sum_{i_1,\ldots,i_{p}=0}^{n-1}x_{i_1}\ldots x_{i_{p}}e_{i-i_1-i_2-i_3\cdots -i_p \mbox{ mod $n$}},
\end{align*}
for $i=1,\ldots, n$.  Therefore the trace of $C_n^{p}$ can be written as 
\begin{align}\label{trace formula C_n}
\Tr(C_n^{p})=\sum_{i=1}^{n}e_i^t(C_n)e_i=n\sum_{A_{p}}x_{i_1}\cdots x_{i_{p}},
\end{align}
where $A_p$ is as defined in \eqref{def:A_p}. For a similar result on the trace of band Toeplitz matrix see \cite{liu_wang2011}.
The following result will be used in the proof of  Theorem \ref{thm:cirpoly}.
\begin{result}\label{ft:variance}
Consider  $A_{p}$ as defined above. Then 
\begin{align*}
\lim_{n\to \infty}\frac{|A_p|}{n^{p-1}}=\sum_{s=0}^{p-1}\lim_{n\to \infty}\frac{|A_{p,s}|}{n^{p-1}}=\sum_{s=0}^{p-1}f_p(s),
\end{align*}
where $$f_p(s)=\frac{1}{(p-1)!}\sum_{k=0}^{s}(-1)^k\binom{p}{k}(s-k)^{p-1}.$$
\end{result}
For the proof of Result \ref{ft:variance}, we refer to \cite[Lemma 13]{adhikari_saha2017}.
Assuming the lemma we proceed to prove Theorem \ref{thm:cirpoly}.

\begin{proof}[Proof of Theorem \ref{thm:cirpoly}]
We first  calculate expected value of $\Tr[P_d(C_n)]$. 
 Using the trace formula \eqref{trace formula C_n},   we get 
\begin{align*}
\E(\Tr[P_d(C_n)])&=\sum_{k=2}^da_k\E\Tr[C_n^k]=\sum_{k=2}^d \frac{a_k}{n^{\frac{k}{2}-1}} \sum_{A_k}\E[X_{i_1}\cdots X_{i_k}].
\end{align*}
Note that, for $\E[X_{i_1}\cdots X_{i_k}]$ to be non-zero, each random variable has to appear at least twice as the random variables have mean zero. Again the index variables satisfy one constrain since $(i_1,i_2,\ldots,i_k)$ belongs to $A_k$. Thus we have at most  $(\frac{k}{2}-1)$  free choice in the index set. Due to this fact and \eqref{eqn:condition}, we have 
\begin{align}\label{eqn:mean}
\E(\Tr[P_d(C_n)])=O(1).
\end{align} 
Now  we calculate the limit of the variance of $\frac{\Tr[P_d(C_n)]-\E(\Tr[P_d(C_n)])}{\sqrt n}$. This variance calculation will help us to understand the behaviour of higher order central moments of  $\Tr[P_d(C_n)]$ as $n$ tends to infinity. 
By \eqref{eqn:mean} we have
\begin{align*}
\lim_{n\to \infty}\var\l(\frac{\Tr[P_d(C_n)]-\E(\Tr[P_d(C_n)])}{\sqrt n}\r)=\lim_{n\to \infty}\frac{1}{n}\E(\Tr[P_d(C_n)])^2.
\end{align*}
Expanding the polynomial $P_d$ and using the trace formula \eqref{trace formula C_n}, we have 
\begin{align}\label{eqn:var}
\frac{1}{n}\E(\Tr[P_d(C_n)])^2&=\sum_{i_1,i_2=2}^d a_{i_1}a_{i_2}\frac{1}{n^{\frac{i_1+i_2}{2}-1}}\sum_{A_{i_1}, A_{i_2}}\E[X_{j_1}\cdots X_{j_{i_1}}X_{k_1}\cdots X_{k_{i_2}}]\nonumber
\\&=\sum_{i_1,i_2=2}^da_{i_1}a_{i_2}\frac{1}{n^{\frac{i_1+i_2}{2}-1}}\sum_{s=0}^{i_1-1}\sum_{t=0}^{i_2-1}\sum_{A_{i_1,s}, A_{i_2,t}}\E[X_{j_1}\cdots X_{j_{i_1}}X_{k_1}\cdots X_{k_{i_2}}].
\end{align}
Note that, for the non-zero contribution, no random variable can appear only once, as the random variables are independent and have  zero mean. Therefore each indices in $\{j_1,\ldots,j_{i_1},k_1,\ldots, k_{i_2}\}$ has to  appear at least twice. Observe that, if there is a self-matching in $\{j_1,\ldots, j_{i_1}\}$ or in $\{k_1,\ldots, k_{i_2}\}$, then the indices satisfy at least two equations. Therefore in such cases we have  $|A_{i_1,s}|| A_{i_2,t}|=O(n^{\frac{i_1+i_2}{2}-2})$. As  all the  moments of the input random variables are finite by \eqref{eqn:condition},  we have
$$
\sum_{A_{i_1,s}, A_{i_2,t}}\E[X_{j_1}\cdots X_{j_{i_1}}X_{k_1}\cdots X_{k_{i_2}}]=O(n^{\frac{i_1+i_2}{2}-2}),
$$   
when $A_{i_1,s}, A_{i_2,t}$ satisfy the self matching condition. Therefore the maximum contribution comes when $\{j_1,\ldots, j_{i_1}\}$ matched with $\{k_1,\ldots, k_{i_2}\}$ completely. This is possible only when $i_1=i_2$ and $s=t$, otherwise there will be a self-matching either in $\{j_1,\ldots, j_{i_1}\}$ or in $\{k_1,\ldots, k_{i_2}\}$. Thus, from \eqref{eqn:var}, we get 
\begin{align*}
\lim_{n\to \infty}\frac{1}{n}\E(\Tr[P_d(C_n)])^2=\lim_{n\to \infty}\sum_{i=2}^d a_{i}^2\  i! \frac{1}{n^{i-1}}\sum_{s=0}^{i-1}\sum_{A_{i,s}}\E[X_{j_1}^2\cdots X_{j_{i}}^2].
\end{align*}
The factor $i!$ appeared because $\{k_1,\ldots, k_{i}\}$ can match with given vector $(j_1,j_2,\ldots, j_{i})$ in $i!$ ways. 
The maximum contribution comes when $(j_1,\ldots, j_{i})$ consists of distinct elements and that contribution is $O(n^{i-1})$. Otherwise the contribution will be of the order of $O(n^{i-2})$. Therefore we have 
\begin{align*}
\lim_{n\to \infty}\frac{1}{n}\E(\Tr[P_d(C_n)])^2&=\lim_{n\to \infty}\sum_{i=2}^da_{i}^2\  i! \frac{1}{n^{i-1}}\sum_{s=0}^{i-1}\sum_{A_{i,s}'}\E[X_{j_1}^2\cdots X_{j_{i}}^2]
\\&=\sum_{i=2}^da_{i}^2\  i!  \sum_{s=0}^{i-1}\lim_{n\to \infty}\frac{|A_{i,s}'|}{n^{i-1}}
=\sum_{i=2}^da_{i}^2\  i!  \sum_{s=0}^{i-1}\lim_{n\to \infty}\frac{|A_{i,s}|}{n^{i-1}},
\end{align*}
where $A_{i,s}'$ and $A_{i,s}$ are as defined in \eqref{def:A_p}.  The last equality holds because if any two indices of $(j_1,\ldots, j_{i})$ are equal then $|A_{i,s}|=O(n^{i-2})$, which contribute zero in the limit. Therefore from Result \ref{ft:variance}, we get
\begin{align*}
\lim_{n\to \infty}\frac{1}{n}\E(\Tr[P_d(C_n)])^2=\sum_{i=2}^da_{i}^2\  i! \sum_{s=0}^{i-1}f_i(s)
\end{align*}
Thus the limiting variance $\sigma_{P_d}^2$ is given by
\begin{align}\label{liming variance}
\sigma_{p_d}^2=\lim_{n\to \infty}\var\l(\frac{\Tr[P_d(C_n)-\E(\Tr[P_d(C_n)])}{\sqrt n}\r)=\sum_{i=2}^da_{i}^2\  i! \sum_{s=0}^{i-1}f_i(s).
\end{align}
Next we calculate the higher order  moments of $\frac{\Tr[P_d(C_n)]-\E(\Tr[P_d(C_n)])}{\sqrt n}$. Using the binomial expansion we have 
\begin{align}\label{eqn:expansion}
\left(\frac{\Tr [P_d(C_n)]-\E\Tr [P_d(C_n)]}{\sqrt{n}}\right)^k&=\frac{1}{n^{\frac{k}{2}}}\sum_{j=0}^{k}(-1)^{k-j}\binom{k}{j}(\Tr [P_d(C_n))^j](\E\Tr [P_d(C_n)])^{k-j}.
\end{align} 
Since $\E\Tr [P_d(C_n)]=O(1)$ (see \eqref{eqn:mean}), we focus on $(\Tr [P_d(C_n)])^j$.  By expanding the polynomial  we get
\begin{align}\label{eqn:exp2}
(\Tr [P_d(C_n)])^j=\sum_{I_j}a_{i_1}a_{i_2}\ldots a_{i_j}[\Tr C_n^{i_1}\cdots \Tr C_n^{i_j}],
\end{align}
where $I_j=\{(i_1,\ldots , i_{j})\; :\; 2\le i_1,\ldots, i_j\le d\}$. From the trace formula \eqref{trace formula C_n} of the circulant matrix, we have 
\begin{equation}\label{eqn_referee}
\E[\Tr C_n^{i_1}\cdots \Tr C_n^{i_j}]=\frac{1}{n^{\frac{i_1+\cdots+i_j}{2}-j}}\sum_{A^{(i_1,\ldots, i_j)}}\E\l(\prod_{\ell=1}^{j}[X_{k_{\ell,1}}\cdots X_{k_{\ell,i_{\ell}}}]\r),
\end{equation}
where $A^{(i_1,\ldots, i_j)}=\{(A_{i_1},\ldots, A_{i_j}):\; 2\le i_1,\ldots, i_j\le d\}$ and $A_{i_1},\ldots,A_{i_j}$ are as defined in \eqref{def:A_p}. Also note that in the sum in the right hand side of \eqref{eqn_referee} for each $\ell$, we have $(k_{\ell,1},k_{\ell,2},\ldots,k_{\ell,i_\ell})\in A_{i_\ell}$. For non zero contribution, the each random variables in $\{X_{k_{\ell,1}},\ldots ,X_{k_{\ell,i_{\ell}}}\; :\; \ell=1,\ldots, j\}$ must occur at least twice as the random variables have mean zero.  Observe that, following the  arguments given in variance calculation, we get the maximum contribution when for every $\ell$ there exists $\ell'$ such that $i_{\ell}=i_{\ell '}$ and the sets $\{{k_{\ell,1}},\ldots ,{k_{\ell,i_{\ell}}}\}$  and  $\{{k_{\ell',1}},\ldots ,{k_{\ell',i_{\ell'}}}\}$ are same with distinct elements. Therefore we need a pair matching in $\{i_1,\ldots,i_j\}$ to have maximum contribution. In other cases we have lower order contribution, as  all the moments of the random variables are finite. Thus we get 
\begin{align}\label{eqn:lesscontribution}
\sum_{A^{(i_1,\ldots, i_j)}}\E\l(\prod_{\ell=1}^{j}[X_{k_{\ell,1}}\cdots X_{k_{\ell,i_{\ell}}}]\r)=O(n^{\frac{i_1+\cdots+i_{j}}{2}-\lceil \frac{j}{2}\rceil}).
\end{align}
Therefore using \eqref{eqn:lesscontribution}, from \eqref{eqn:exp2} we get
\begin{align}\label{eqn:moment}
\E(\Tr [P_d(C_n)])^j=O(n^{j-\lceil \frac{j}{2}\rceil}).
\end{align}
Therefore using \eqref{eqn:mean} and \eqref{eqn:moment}, from \eqref{eqn:expansion} we get 
\begin{align*}
\lim_{n\to \infty}\E\left(\frac{\Tr [P_d(C_n)]-\E\tr [P_d(C_n)]}{\sqrt{n}}\right)^k=0, \;\;\mbox{when $k$ is odd}.
\end{align*}
Next we calculate the even moments. We use $2k$ instead of $k$. Again due to \eqref{eqn:mean} and \eqref{eqn:moment}, from \eqref{eqn:expansion} we get 
\begin{align*}
&\lim_{n\to \infty}\E\left(\frac{\Tr [P_d(C_n)]-\E\Tr [P_d(C_n)]}{\sqrt{n}}\right)^{2k}=\lim_{n\to \infty}\frac{1}{n^k}\E(\Tr [P_d(C_n)])^{2k}
\\=&\frac{(2k)!}{k!2^k}\sum_{I_k}a_{i_1}^2\cdots a_{i_k}^2 \lim_{n\to \infty}\frac{i_1!\cdots i_k!}{n^{i_1+\cdots+i_k-k}}\sum_{A^{(i_1,\ldots, i_k)}}\E\l[\prod_{\ell=1}^{k}[X_{k_{\ell,1}}^2\cdots X_{k_{\ell,i_{\ell}}}^2]\r].
\end{align*}
The factor $\frac{(2k)!}{k!2^k}$ appear because that many pair matched possible among $2k$ variables $\{i_1,\ldots, i_{2k}\}$. 
After the pair matching in $\{i_1,\ldots, i_{2k}\}$, we rename the indices as $\{i_1,\ldots, i_k\}$. The factor $i_1!\cdots i_k!$ appear because,  for $\ell=1,\ldots, k$, each vector  $(k_{\ell,1},\ldots,k_{\ell,i_{\ell}})$  can be pair matched  with $\{k'_{\ell,1},\ldots,k'_{\ell,i_{\ell}}\}$ in  $i_{\ell}!$ many ways.  Now we have 
\begin{align*}
\lim_{n\to \infty}\frac{1}{n^{i_1+\cdots+i_k-k}}\sum_{A^{(i_1,\ldots, i_k)}}\E\l[\prod_{\ell=1}^{k}[X_{k_{\ell,1}}^2\cdots X_{k_{\ell,i_{\ell}}}^2]\r]=\lim_{n\to \infty}\frac{|A^{(i_1,\ldots, i_k)'}|}{n^{i_1+\cdots+i_k-k}},
\end{align*}
where $A^{(i_1,\ldots, i_k)'}=\{(A_{i_1}',\ldots, A_{i_k}')\;:\; \mbox{all coordinates are distinct throughout all } A_{i_l}'\}$ and $A_{i_l}', 1\leq l\leq k$ are as in \eqref{def:A_p}.  Again we have
$$
\lim_{n\to \infty}\frac{|A^{(i_1,\ldots, i_k)'}|}{n^{i_1+\cdots+i_k-k}}=\lim_{n\to \infty}\frac{|A^{(i_1,\ldots, i_k)}|}{n^{i_1+\cdots+i_k-k}}=\prod_{\ell=1}^k\lim_{n\to \infty}\frac{|A_{i_{\ell}}|}{n^{i_{\ell}-1}}.
$$
Therefore by Result \ref{ft:variance}, we get 
\begin{align}\label{2kth moment of normal}
\lim_{n\to \infty}\E\left(\frac{\Tr [P_d(C_n)]-\E\Tr [P_d(C_n)]}{\sqrt{n}}\right)^{2k}&=\frac{(2k)!}{k!2^k}\sum_{I_k}\prod_{\ell=1}^k a_{i_{\ell}}^2\  i_{\ell}! \sum_{s=0}^{i_{\ell}-1}f_{i_{\ell}}(s) \nonumber\\
&=\frac{(2k)!}{k!2^k} \l( \sum_{i=2}^d a_{i}^2\ i! \sum_{s=0}^{i-1}f_{i}(s)\r)^k,
\end{align}
where $I_k=\{(i_1,\ldots , i_{k})\; :\; 2\le i_1,\ldots, i_k\le d\}$. The  final expression in \eqref{2kth moment of normal}  is the $2k$-th moment of $N(0,\sigma_{p_d}^2)$ and this completes the proof.
\end{proof}

\section{proof of theorem \ref{thm:main1}}\label{sec:totalvariation}

In this section we give the proof of Theorem \ref{thm:main1}. The following result is the key ingredient for the proof. 

\begin{result}\label{re:sourav}\cite[ Theorem 2.2]{chatterjee}
{\it Let $X=(X_1,X_2,\ldots,X_n)$ be a vector of independent random variables in $\mathcal L(c_1,c_2)$ for some finite $c_1,c_2$. Take any $g\in C^2(\mathbb R^n)$ and let $\nabla g$ and $\nabla^2 g$ denote the gradient and Hessian of $g$. Let 
\begin{align*}
\kappa_0= \l(\E\sum_{k=1}^n\l|\frac{\partial g}{\partial x_k}(X)\r|^4\r)^{\frac{1}{2}},\ 
\kappa_1= (\E\|\nabla g(X) \|^4)^{\frac{1}{4}} \mbox{ and }
\kappa_2= (\E\|\nabla^2 g(X) \|^4)^{\frac{1}{4}}.
\end{align*}
Suppose $W=g(X)$ has a finite fourth moment and  $\sigma^2=\var(W)$. Let $Z$ be a normal random variable having the same mean and variance as $W$. Then 
$$
d_{TV}(W,Z)\le \frac{2\sqrt{5}(c_1c_2\kappa_0+c_1^3\kappa_1\kappa_2)}{\sigma^2}.
$$}
\end{result} 

 We use Result \ref{re:sourav} to prove Theorem \ref{thm:main1}, and for that we need to estimate  $\kappa_0, \kappa_1,\kappa_2$ and $\sigma^2$. The following  lemma gives the estimates of these quantities.

\begin{lemma}\label{lem:kappa}
Let $g(X_0,X_1,\ldots,X_{n-1})=\Tr(P_d(C_n))$ and consider $\kappa_0,\kappa_1$ and $\kappa_2$ as defined in Result \ref{re:sourav}. Then
\begin{align*}
\kappa_0&= O(n^{\frac{1}{2}}),\;\;
\kappa_1= O(n^{\frac{1}{2}})\;\;\mbox{ and }\;\;
\kappa_2=O\l(\frac{1}{n}(\sqrt{ \log n})^{d-2}\r).
\end{align*}
\end{lemma}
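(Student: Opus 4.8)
\textbf{Proof plan for Lemma \ref{lem:kappa}.}

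The plan is to compute the partial derivatives of $g=\Tr(P_d(C_n))$ with respect to the input variables $X_j$ explicitly, using the trace formula \eqref{trace formula C_n}, and then to bound the three quantities $\kappa_0,\kappa_1,\kappa_2$ term by term in the polynomial $P_d$. Since $C_n$ is linear in the inputs $x_i = X_i/\sqrt n$, for each monomial $C_n^k$ we have $\partial \Tr(C_n^k)/\partial X_j = (k/\sqrt n)\,[C_n^{k-1}]_{\text{coefficient of }x_j}$, which by the trace formula equals $\frac{k}{\sqrt n}\cdot\frac{n}{n^{(k-1)/2}}\sum_{A_{k-1}^{(j)}} x_{i_1}\cdots x_{i_{k-2}}$, where $A_{k-1}^{(j)}$ is the set of $(k-1)$-tuples summing to $-j \pmod n$. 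The first step is to record this formula cleanly and note that $\partial g/\partial X_j$ is itself a polynomial of degree $d-2$ in the rescaled variables with $O(n^{1})$ coefficients in front of the relevant power sums; in particular $\E|\partial g/\partial X_j|^4$ is $O(1)$ uniformly in $j$ because the $X_i$ are subgaussian (hence have all moments bounded, as noted before Theorem \ref{thm:main1}) and the sum over $A_{k-1}^{(j)}$ has $O(n^{k-2})$ terms, matched against the normalization $n/n^{(k-1)/2}$ and the fact that $\E[X_{i_1}^2\cdots]$ survives only on diagonal-type tuples. Summing $\E|\partial g/\partial X_j|^4$ over the $n$ coordinates then gives $\kappa_0 = O(n^{1/2})$, and since $\|\nabla g\|^2 = \sum_j |\partial g/\partial X_j|^2 \le (\sum_j |\partial g/\partial X_j|^4)^{1/2} n^{1/2}$ is also $O(n)$ in $L^2$, a parallel computation of $\E\|\nabla g\|^4$ yields $\kappa_1 = O(n^{1/2})$.

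The Hessian bound is the crux. Here $\partial^2 \Tr(C_n^k)/\partial X_a \partial X_b = \frac{k}{n}\sum_{m=1}^{k-1}[\text{extract }x_a,x_b\text{ in positions }m,m']$, which again via the trace formula is $\frac{1}{n}\times O(1)\times\sum_{A_{k-2}^{(a,b)}} x_{i_1}\cdots x_{i_{k-3}}$ where the tuple sums to $-(a+b)\pmod n$; the operator norm $\|\nabla^2 g\|$ is what we must control, and the key observation is that the Hessian matrix $H$ with entries $H_{ab}$ has a circulant-like structure: $H_{ab}$ depends on $a+b \pmod n$ only. So $H$ is (conjugate to) a Hankel/circulant matrix and its operator norm is governed by the Fourier coefficients, i.e.\ by sums of the form $\frac{1}{n}\sum_{\text{tuples}} x_{i_1}\cdots x_{i_{k-3}} e^{2\pi i \ell(\cdot)/n}$. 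I would diagonalize: write $H$ in the Fourier basis and bound $\|H\|$ by the maximum over frequencies $\ell$ of $\big|\frac{1}{n}\sum (\text{monomial of degree }k-3) \cdot(\text{character})\big|$ times a combinatorial constant. Each such frequency-localized sum is, up to scaling, a value of a random trigonometric polynomial / power-sum-type object in the subgaussian variables $X_i/\sqrt n$, and the point is that a sum of $n^{k-3}$ products of $(k-3)$ independent mean-zero subgaussian variables of scale $n^{-1/2}$, weighted by a character, has typical size $O(1/n)$ up to logarithmic corrections, with the maximum over $\ell$ contributing the $(\sqrt{\log n})^{d-2}$ factor via a union bound over the $n$ frequencies combined with subgaussian tail estimates (the exponent $d-2$ rather than $d$ appears because differentiating twice drops the degree from $d$ to $d-2$, and each of the at most $d-2$ remaining factors contributes one $\sqrt{\log n}$). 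Taking the fourth moment of $\|H\|$ and the fourth root then gives $\kappa_2 = O\big(\frac{1}{n}(\sqrt{\log n})^{d-2}\big)$.

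The main obstacle, and where the real work lies, is the Hessian operator-norm estimate: one must (i) identify the circulant/Hankel structure of $\nabla^2 g$ carefully keeping track of which index-shift produces the $a+b$ dependence through all the cross terms coming from the noncommutative product rule on $C_n^k$, and (ii) establish the concentration of the frequency-localized power sums with the correct $\sqrt{\log n}$ power — this requires a subgaussian/hypercontractivity bound for multilinear polynomials in independent subgaussian variables (a Hanson–Wright or Latała-type moment inequality, or a direct chaining argument), followed by a union bound over the $n$ Fourier modes. The bounds on $\kappa_0$ and $\kappa_1$, by contrast, are routine moment computations once the derivative formula is written down. I would also remark that the variance lower bound $\sigma^2 = \var(\Tr P_d(C_n)) \gtrsim n$ needed to conclude Theorem \ref{thm:main1} from Result \ref{re:sourav} follows from the variance computation already carried out in the proof of Theorem \ref{thm:cirpoly} (equation \eqref{liming variance}), together with the observation that $\sigma_{p_d}^2 > 0$ whenever some $a_\ell \ne 0$, since each $f_\ell(s) \ge 0$ and $f_\ell(0) = 1/(\ell-1)! > 0$.
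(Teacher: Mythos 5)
Your treatment of $\kappa_0$ and $\kappa_1$ is essentially the paper's argument: write $\partial g/\partial X_j$ explicitly from the trace formula \eqref{trace formula C_n}, count matched index tuples to get $\E|\partial g/\partial X_j|^4=O(1)$ uniformly in $j$ (using that subgaussian inputs have all moments bounded), sum over the $n$ coordinates, and use Cauchy--Schwarz for $\kappa_1$. (A minor slip: two derivatives of a degree-$k$ monomial leave monomials of degree $k-2$, not $k-3$.) For $\kappa_2$ you depart from the paper, which invokes Chatterjee's deterministic bound $\|H\|\le f_2(\|A\|)$ for the $n^2\times n^2$ Hessian in the matrix entries (Result \ref{re:norm}) together with the almost sure spectral norm bound $\|C_n\|\le\alpha\sqrt{\log n}$ (Result \ref{res:norm1}); you instead propose to diagonalize the Hessian in the Fourier basis, using that $\partial^2 g/\partial X_a\partial X_b$ depends only on $(a+b)\bmod n$, and to control the frequency-localized sums by subgaussian concentration plus a union bound. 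The structural observation is correct and the diagonalization is a legitimate (arguably cleaner) route.

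The genuine gap is your assertion that each frequency-localized sum ``has typical size $O(1/n)$'': this is unproved, it is false, and it is precisely where the factor $1/n$ in the claimed bound would have to come from. Carrying your diagonalization through: a symmetric matrix with $(a,b)$ entry $c_{(a+b)\bmod n}$ has singular values $|\widehat{c}(t)|$, $\widehat{c}(t)=\sum_m c_m e^{2\pi i mt/n}$, and for the degree-$k$ term of $P_d$ the transform at frequency $t$ equals $a_k k(k-1)\lambda_t^{k-2}$, where $\lambda_t=n^{-1/2}\sum_i X_i e^{-2\pi i it/n}$ is an eigenvalue of $C_n$ (the prefactor $n^{-(k/2-1)}=(\sqrt n)^{-(k-2)}$ is exactly absorbed by the $k-2$ factors of $\sqrt n$). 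Hence $\|\nabla^2 g\|=\max_t|P_d''(\lambda_t)|\le f_2(\|C_n\|)=O\bigl((\sqrt{\log n})^{d-2}\bigr)$, with no factor $1/n$; the loss of degree from $d$ to $d-2$ is real, but the sums are $O(1)$ per frequency, not $O(1/n)$. The factor cannot be recovered by any argument: for $P_d(x)=x^2$ one has $g=\sum_u X_u X_{(n-u)\bmod n}$, whose Hessian is twice a permutation matrix, so $\kappa_2=2$. (The paper's own proof inserts the $1/n$ through the claim that the top-left $n\times n$ block of $H$ equals $n\,\nabla^2 g$; the chain rule actually gives $\nabla^2 g=B^tHB$ with $B$ an isometry of $\R^n$ into $\R^{n^2}$, whence only $\|\nabla^2 g\|\le\|H\|$.) The bound that can be proved is $\kappa_2=O((\sqrt{\log n})^{d-2})$, which still suffices for Theorem \ref{thm:main1} since then $d_{TV}(W_n,Z_n)=O\bigl(\sqrt n\,(\log n)^{(d-2)/2}/n\bigr)\to0$; your closing observation that $\sigma_{p_d}^2>0$ is needed and correct in substance, though $f_\ell(0)=0$ — positivity comes from $f_\ell(1)=1$.
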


\noindent Assuming Lemma \ref{lem:kappa} we proceed to prove Theorem \ref{thm:main1}. 

\begin{proof}[Proof of Theorem \ref{thm:main1}] Let $W_n=\Tr(P_d(C_n))$. Using  Lemma \ref{lem:kappa} in Result \ref{re:sourav}, we  get
\begin{align}\label{eqn:TVnorm}
d_{TV}(W_n,Z_n)\le \frac{O(\sqrt n )}{\var(\Tr(P_d(C_n)))},
\end{align}
where $Z_n$ is a normal random variable having the same mean and variance as $W_n$.
Now from the variance calculation \eqref{liming variance} in the proof of Theorem \ref{thm:cirpoly}, we get
\begin{align*}
\lim_{n\to \infty}\frac{1}{n}\var(\Tr(P_d(C_n)))=\sigma_{P_d}^2.
\end{align*}
Which implies that the right hand side of \eqref{eqn:TVnorm} goes to zero as $n\to \infty$, as $\sigma_{P_d}^2>0$. Hence the result.
\end{proof}

\noindent It remains to prove Lemma \ref{lem:kappa}. The following result will be used for estimating $\kappa_2$.
\begin{result}\label{res:norm1}
Let $C_n$ be a circulant matrix with input sequence $\{\frac{X_i}{\sqrt n}\}$, where $X_i$'s are symmetric $\sigma$-subgaussian. Then, for some $\alpha>0$, 
$$
\| C_n \|\le \alpha\sqrt{\log n} \;\;\mbox{a.s.},
$$
where $\|C_n\|:=\sup \{\|C_n x\|_2: x\in \R^n\}$ and $\|x\|_2=\sqrt{\sum_{i=1}^nx_i^2}$ for $x=(x_1,\ldots,x_n)^t\in~\R^n$.
\end{result}
We skip the proof of Result \ref{res:norm1}. For a proof of it  see the proof of Theorem 8 and Remark 19 in \cite{adhikari_saha2017}, and see \cite{meckes} also. The following result from \cite{chatterjee} will be used in the proof of Lemma \ref{lem:kappa}.

\begin{result}\label{re:norm}
{\it Let $A=(a_{ij})_{1\le i,j\le n}$ be an arbitrary square matrix with complex entries. Let $f(z)=\sum_{m=0}^{\infty}b_mz^m$ be an entire function. Define two associate entire functions $f_1=\sum_{m=1}^{\infty}m|b_m|z^{m-1}$ and $f_2=\sum_{m=2}^{\infty}m(m-1)|b_m|z^{m-2}$. Then, for each  $i,j$, we have 
$$\frac{\partial}{\partial a_{ij}}\Tr(f(A))=(f'(A))_{ji},$$
Next, for each $1\le i,j,k,\ell\le n$, let 
$$
h_{ij,k\ell}=\frac{\partial^2}{\partial a_{ij}\partial a_{k\ell}}\Tr(f(A)).
$$
Let $H$ be the $n^2\times n^2$ matrix $(h_{ij,k\ell})_{1\le i,j,k,\ell\le n}$. Then $\|H\|\le f_2(\|A\|)$.}
\end{result}
For the proof of Result \ref{re:norm}, we refer to \cite[Lemma 5.4]{chatterjee}. We use the following notation: For positive integers $p$ and $q$, define
\begin{align*}
N_{p}^{q}&=\{(i_1,i_2,\ldots,i_p)\suchthat i_1+i_2+\cdots + i_p=q,  0\le i_1,i_2,\ldots,i_p\le n-1\}.
\end{align*}
\begin{proof}[Proof of Lemma \ref{lem:kappa}]
Let $g(X_0,X_1,\ldots,X_{n-1})=\Tr(P_d(C_n))$. Then from the trace formula \eqref{trace formula C_n} of $C_n$, we have  
$$
g(X)=\sum_{k=2}^{d}\frac{a_k}{n^{\frac{k}{2}-1}}\sum_{A_k}X_{i_1}X_{i_2}\cdots X_{i_k}=\sum_{k=2}^{d}\frac{a_k}{n^{\frac{k}{2}-1}}\sum_{s=0}^{k-1}\sum_{N_{k}^{sn}}X_{i_1}X_{i_2}\cdots X_{i_k},
$$
where $X=(X_0,X_1,\ldots,X_{n-1})$. Therefore, for $0\le j,\ell\le n-1$, we have 
\begin{align*}
\frac{\partial g}{\partial x_j}(X)&=\sum_{k=2}^{d}\frac{a_k}{n^{\frac{k}{2}-1}}\sum_{s=0}^{k-1}k \sum_{N_{k-1}^{sn-j}}X_{i_1}X_{i_2}\cdots X_{i_{k-1}} \;\mbox{ and } \\\frac{\partial^2 g}{\partial x_{\ell}\partial x_j}(X)&=\sum_{k=2}^{d}\frac{a_k}{n^{\frac{k}{2}-1}}\sum_{s=0}^{k-1}k(k-1)\sum_{N_{k-2}^{sn-j-\ell}}X_{i_1}X_{i_2}\cdots X_{i_{k-2}}.
\end{align*}
Therefore we have 
\begin{align}\label{derivative of g power four}  
\E\l|\frac{\partial g}{\partial x_j}(X)\r|^4=\sum_{I_4}\frac{k_1k_2k_3k_4a_{k_1}a_{k_2}a_{k_3}a_{k_4}}{n^{\frac{k_1+k_2+k_3+k_4}{2}-4}}\sum_{S(k_1,k_2,k_3,k_4)}\sum_{N_{k_1,\ldots,k_4}^{s_1,\ldots,s_4}}\E\prod_{j=1}^4[X_{i_{j,1}}\cdots X_{i_{j,k_j-1}}].
\end{align}
where 
\begin{align*}I_4&=\{(k_1,\ldots,k_4)\; :\; 2\le k_1,\ldots,k_4\le d\},\\
 S(k_1,k_2,k_3,k_4)&=\{(s_1,\ldots,s_4): 0\le s_j\le k_j-1,j=1,\ldots,4\},\\
 N_{k_1,\ldots,k_4}^{s_1,\ldots,s_4}&=(N_{k_1-1}^{ s_1n-j},N_{k_2-1}^{ s_2n-j},N_{k_3-1}^{ s_3n-j},N_{k_4-1}^{ s_4n-j}).
 \end{align*} 
The input random variables are independent and have  mean zero, as they are symmetric $\sigma$-subgaussian. Therefore each random variable has to appear at least twice for non zero contribution in the right hand side of \eqref{derivative of g power four}. Note that, the total number of variables in the set $N_{k_1,\ldots,k_4}^{s_1,\ldots,s_4}$ is $k_1+k_2+k_3+k_4-4$. Following the  arguments as given  to find the limiting variance in the proof of Theorem \ref{thm:cirpoly}, we get
\begin{align*}
\sum_{N_{k_1,\ldots,k_4}^{s_1,\ldots,s_4}}\E\prod_{j=1}^4[X_{i_{j,1}}\cdots X_{i_{j,k_j-1}}]=O(n^{\frac{k_1+\cdots+k_4-4}{2}-2})=O(n^{\frac{k_1+\cdots+k_4}{2}-4}),
\end{align*} 
as the input random variables are $\sigma$-subgaussian. Since the degree $d$ of the polynomial  is fixed, we have 
\begin{align*}
\E\l|\frac{\partial g}{\partial x_j}(X)\r|^4=O(1) \;\; \mbox{and }\kappa_0=\l(\E\sum_{k=0}^{n-1}\l|\frac{\partial g}{\partial x_j}(X)\r|^4\r)^{\frac{1}{2}}=O(n^{\frac{1}{2}}).
\end{align*} 
Using Cauchy-Schwarz inequality and the bound of $\kappa_0$, we have
\begin{align*}
\kappa_1=\l(\E\|\nabla g\|^4\r)^{\frac{1}{4}}=\l(\E\l(\sum_{k=0}^{n-1}\l|\frac{\partial g}{\partial x_k}(X)\r|^2\r)^2\r)^{\frac{1}{4}}=O(n^{\frac{1}{2}}).
\end{align*}
Now  we use Result \ref{re:norm} to get an upper bound for $\kappa_2$. Let $f(z)=P_d(z)$ and $A=C_n$. Then $a_{ij}=\frac{1}{\sqrt n}X_{j-i (\mbox{ mod $n$})}$, in particular, $a_{1i}=\frac{1}{\sqrt n} X_{i-1}$ for $i=1,\ldots, n$. Considering the matrix $A$ as a $n^2\times 1$ vector $(a_{11},\ldots, a_{1n},a_{21},\ldots, a_{2n},a_{31}, \ldots, a_{nn})^t$, the matrix  $H=(h_{ij,k\ell})$, where $h_{ij,k\ell}=\frac{\partial^2}{\partial a_{ij}\partial a_{k\ell}}\Tr(P_d(A)),$ has the following form
$$
 H=\l(\begin{array}{cc}
n[\nabla^2g]_{n\times n} & *
\\ * & *
\end{array}\r)_{n^2\times n^2}.
$$
Note that $n$ appeared in the first $n\times n$ block of $H$ due to the change of variables from $\{a_{11}, \ldots, a_{1n}\}$ to $\{x_0,\ldots, x_{n-1}\}$. From Results \ref{re:norm} and  \ref{res:norm1}, we have
 $$
 \|\nabla^2g\|\le \frac{1}{n}\|H\|\le \frac{1}{n} f_2(\|C_n\|)\le  C\frac{1}{n}(\sqrt{\log n})^{d-2} \;\;\mbox{ a.s.}
 $$  
for some non-random constant $C$.
Therefore we have  
$$
\kappa_2=(\E\|\nabla^2 g(X)\|^4)^{\frac{1}{4}}=O\l(\frac{1}{n}(\sqrt{\log n})^{d-2}\r).
$$
This completes the proof.
\end{proof}

\noindent{\bf Acknowledgement:} We would like to thank Prof. Arup Bose for his comments. We thank both the referees for their useful suggestions.

\bibliographystyle{amsplain}

\providecommand{\bysame}{\leavevmode\hbox to3em{\hrulefill}\thinspace}
\providecommand{\MR}{\relax\ifhmode\unskip\space\fi MR }
\providecommand{\MRhref}[2]{%
  \href{http://www.ams.org/mathscinet-getitem?mr=#1}{#2}
}
\providecommand{\href}[2]{#2}

\end{document}